\newtheorem{Theorem}{\indent Theorem}[section]
\newtheorem{Lemma}[Theorem]{\indent Lemma}
\theoremstyle{remark}
\newtheorem{Remark}{Remark}
\begin{document}
\centerline{   On density of the zeros of  Dedekind zeta-functions}
\medskip

\noindent
\centerline{Wei Zhang}

\centerline{
School of Mathematics and Statistics,
Henan University,
Kaifeng  475004, Henan, China}
\centerline{zhangweimath@126.com}
\medskip

{\small \textbf{Abstract}
For any $\sigma$ with $0\leq \sigma\leq 1$ and any $T>10$ sufficiently
large, let $N_{\zeta}(\sigma,K,T)$ be the number of zeros $\rho=\beta+i\gamma$ of $\zeta_{K}(s)$ with $|\gamma|\leq T$ and $\beta\geq \sigma$
and the zero being counted according to multiplicity. For $k\geq3,$ we have
\[
N_{\zeta}(\sigma,K,T)\ll T^{\frac{2k}{6\sigma-3}(1-\sigma)+\varepsilon},
\]
where
\[
\frac{2k+3}{2k+6}\leq \sigma<1
\]
and the implied constant may depend on the number field $K$ and $\varepsilon.$ This improves previous results for $k\geq3$ of certain range of $\sigma$.

\textbf{2000 Mathematics Subject Classification} 11R42, 11M41
\section{Introduction} Let $K$ be a number field of degree $k=[K: \mathbb{Q}]$, $k\geq2$ and $k=r_{1}+2r_{2}$, where $r_{1}$ is
the number of real conjugate fields and $2r_{2}$ is the number of complex conjugate fields.
Also let $\mathcal{O}_{K}$ be the ring of integers in $K$. For $\Re(s)>1$, one defines the Dedekind zeta
function associated to the number field $K$ as follows:
\[
\zeta_{K}(s)=\sum_{\mathfrak{a}}\frac{1}
{(\mathfrak{Na})^{s}}
=\sum_{n=1}^{\infty}\frac{a_{K}(n)}{n^{s}},\ \ \Re(s)>1,
\]
where $\mathfrak{a}$ suns over the non-zero integral ideals $\mathcal{O}_{K}$ of $K,$  $\mathfrak{Na}$ is the norm of $\mathfrak{a},$ and $a_{K}(n)=\#\{\mathfrak{a}:\mathfrak{Na}=n\}$ is the ideal counting function of $K.$
The Dedekind-zeta function has an analytic continuation to the whole complex plane, and it has only a simple pole at $s=1.$ The Dedekind-zeta function is a generalization of the Riemann zeta function $\zeta(s).$ When $K=\mathbb{Q},$ we have $\zeta(s)=\zeta_{K}(s).$
Next, we investigate estimate for the number of zeros of Dedekind zeta function $\zeta_{K}(s)$
in the strip $1/2\leq \sigma\leq 1$. For any $\sigma$ with $0\leq \sigma\leq 1$ and any $T>10$ sufficiently
large, let $N_{\zeta}(\sigma,K,T)$ be the number of zeros $\rho=\beta+i\gamma$ of $\zeta_{K}(s)$ with $|\gamma|\leq T$ and $\beta\geq \sigma$
and the zero being counted according to multiplicity. It is well known
that
\[
N_{\zeta}(0,K,T)\sim \frac{k}{\pi} T \log T
\]
as $T\rightarrow\infty$.
For $k=2$, and an absolute positive constant $C=C(\varepsilon,K)>0,$ Heath-Brown \cite{HB}  showed that
\begin{align*}
N_{\zeta}(\sigma,K,T)\ll T^{A(\sigma)(1-\sigma)}(\log T)^{C}
\end{align*}
with
\begin{align*}
A(\sigma)=
\begin{cases}
\frac{2}{\sigma}     &\textup{for}\ \  \frac{3}{4}\leq \sigma\leq 1-\varepsilon,\\
\frac{4}{3-2\sigma}       &\textup{for}\ \  \frac{1}{2}\leq\sigma\leq \frac{3}{4},
\end{cases}
\end{align*} \\
where the implied constant depends only on $K$ and $\varepsilon$.
Heath-Brown \cite{HB} also  showed that
\begin{align*}
N_{\zeta}(\sigma,K,T)\ll T^{(2+\varepsilon)(1-\sigma)}(\log T)^{C},
\end{align*}
where  $\sigma\geq {111}/{124}.$
Through out this paper, the quantity $\varepsilon$ is   an arbitrary
small quantity that is allowed to change from line to line.
In fact, for $k\geq 2,$ Heath-Brown \cite{HB} showed that $A(\sigma)=2+\varepsilon$ for $\sigma\geq3/4(1-\mu)$ provided that
$
\zeta(1/2+it)\ll t^{\mu}(\log t)^{v},
$
where $\zeta(s)$ is the Riemann zeta function. Then the Bourgain's \cite{Bo17} recent result implies that for $\sigma >63/71\approx 0.88733$, one has (by adapting the arguments of \cite{HB})
\[N_{\zeta}(\sigma,K,T)\ll T^{2(1-\sigma)
+\varepsilon}.
\]
If one adapts Ivi\'c's \cite{I2} idea for this problem, one can obtain that for
$\sigma >53/60\approx 0.88334$, one has
$N_{\zeta}(\sigma,K,T)\ll T^{2(1-\sigma)
+\varepsilon}.
$
Recently, by adapting the ideas of Bourgain \cite{Bo} and Heath-Brown \cite{H79},  Chen-Debruyne
-Vindas \cite{CDV} improves the result of Ivi\'c \cite{I2}. If one adapts the idea of \cite{CDV} for this problem, one can obtain that for
$\sigma >1407/1601\approx 0.87883$, one has
$N_{\zeta}(\sigma,K,T)\ll T^{2(1-\sigma)
+\varepsilon}.
$
The other results in Ivi\'c's \cite{I2} article cannot be translated to the case of $k=2.$ The reason is that there is no corresponding 6-th   mean value estimate for Dedekind zeta functions of $k=2.$ In fact, only by the 2-th   mean value estimate for Dedekind zeta functions of $k=2$, the sharpest   $A(\sigma)$ only be $2+\varepsilon.$ In this paper, by a different of view, by the result of \cite{H88}, we can obtain a sharper   $A(\sigma)$ when $\sigma$ is close to 1.
\begin{Theorem}\label{th1}
For $k=2,$ we have
\[
N_{\zeta}(\sigma,K,T)\ll T^{\frac{4}{6\sigma-3}(1-\sigma)+\varepsilon},
\]
where $(\kappa,\lambda)$ is any exponent  pair with
\[
\frac{1+\lambda-6\kappa}{2-8\kappa}\leq \sigma<1,
\]
$\kappa<1/4,$
$\kappa+1\leq 5\lambda/2,$ and the implied constant may depend on the number field $K$ and $\varepsilon.$
\end{Theorem}
\begin{Remark}
For $k=2$, if we choose $(\kappa,\lambda)=(1/14,11/14)$, we can obtain that for $19/20<\sigma\leq 1,$ we have
\[
N_{\zeta}(\sigma,K,T)\ll T^{\frac{4}{6\sigma-3}(1-\sigma)
+\varepsilon},
\]
where the implied constant may depend on the number field $K$ and $\varepsilon.$
\end{Remark}
For $k\geq3$ and a positive absolute constant $C$, in 1968, Sokolovsky \cite{S68} showed that
\[
N_{\zeta}(\sigma,K,T)\ll T^{(k+2-C/(k^{2}\log (k+2)))(1-\sigma)+\varepsilon}.
\]
Later, in 1977, Heath-Brown \cite{HB}  improved the above result and showed that for any  $\varepsilon>0,$ there exists a constant
$c=c(K,\varepsilon)$ such that
\[
N_{\zeta}(\sigma,K,T)\ll T^{k(1-\sigma)+\varepsilon}
\]
holds uniformly for $1/2\leq \sigma\leq 1$ when $k\geq3$.
Further assume that $\zeta_{K}(1/2+it)\ll t^{\mu}$ for some real number $\mu>0$ and for all $|t|\geq 10$.
Then, for any $1/2\leq \sigma\leq 1$, in \cite{PS}, it is proved that
\[
N_{\zeta}(\sigma,K,T)\ll T^{2(1+2\mu)(1-\sigma)}
(\log T)^{c(k)},
\]
where $c(k)$ is a positive constant depending on $k$.
As an immediate corollary, one can get
\[
N_{\zeta}(\sigma,K,T)\ll T^{((6+2k)/3)(1-\sigma)+\varepsilon},
\]
where the sub convexity results in \cite{H88} were used.
Since $(6 + 2k)/3 <k$ for any $k >6$, hence for any number field $K$ of degree $[K:Q]=k\geq7,$ the zero-density estimate  strengthens a general result of Heath-Brown \cite{HB}.
In this paper, we will show the following results. Our results improve  the result of Heath-Brown \cite{HB} of $5/6\leq \sigma \leq 1$ for $3\leq k\leq 6$ and improve  the result of  \cite{PS} for $k\geq 7.$
\begin{Theorem}\label{th3}
For $k\geq3,$ we have
\[
N_{\zeta}(\sigma,K,T)\ll T^{\frac{2k}{6\sigma-3}(1-\sigma)+\varepsilon},
\]
where
\[
\frac{2k+3}{2k+6}\leq \sigma<1
\]
and the implied constant may depend on the number field $K$ and $\varepsilon.$
\end{Theorem}

\section{Proof of Theorem \ref{th1} and \ref{th3}}

In order to estimate the number of zeros of $$\zeta_{K}(s)=\sum_{n=1}^{\infty}
\frac{a_{K}(n)}{n^{s}},\ \Re s>1,$$
where $a_{K}(n)=\sum(-1)^{\nu},$ where the sum is over all representation of $n$ as $N(\mathfrak{p}_{1}\mathfrak{p}_{2}
\cdots\mathfrak{p}_{\nu})$ with prime ideals $\mathfrak{p}_{i}$ of $K.$
By the usual procedure of zero-detecting technique will be used.  For $\zeta_{K}(s)$, we have
\[
\frac{1}{\zeta_{K}(s)}=\sum_{n=1}^\infty \frac{\mu_{K}(n)}{n^s},\ \ \Re s>1.
\]
Write
\[
M_X(s,K)=\sum_{n\leq X}\frac{\mu_{K}(n)}{n^s},
\]
where  $X>2$  is some powers of $T$ which will be chosen later and $|\mu_{K}(n)|\leq a_{K}(n)\leq n^{\varepsilon}$. By Euler product
$\mu_{K}(n)=\sum(-1)^{\nu},$ where the sum is over all representation of $n$ as $N(\mathfrak{p}_{1}\mathfrak{p}_{2}
\cdots\mathfrak{p}_{\nu})$ with distinct prime ideals $\mathfrak{p}_{i}$ of $K.$ In most of the proof of zero-density results, we choose $X=T^{\varepsilon}.$
Then we can obtain the following
\[
\zeta_{K}(s)M_X(s,K)=\sum_{n=1}^\infty \frac{C_X(n,K)}{n^s},\ \Re s>1,
\]
where
\begin{align}\label{23.100}
C_X(n,K)=
\begin{cases}
1 &\textup{if}\ n=1,\\
0 &\textup{if}\ 2\leq n\leq X,\\
D_X(n,K) & \textup{if}\ n>X
\end{cases}
\end{align}
with
\[
D_X(n,K)=\sum_{d|n,\ d\leq X}\mu_{K}(d)a_{K}\left(\frac{n}{d}\right).
\]
By similar arguments as previous section, we find that
a non-trivial zero counted of $\zeta_{K}(s)$ denoted by $N_{K}(\sigma,T)$ satisfies either
\begin{equation}\label{3.1000}
\left|\sum_{X<n<Y\log^{2}Y}
\frac{D_X(n,K)}{n^\rho}e^{-\frac nY}\right| \gg 1,
\end{equation}
or
\begin{equation}\label{23.4000}
Y^{\frac1 2-\beta}\int_{-\log^{2} T}^{\log^{2} T}\left|\zeta_{K}(\frac1 2+i\gamma+i\omega)
\right|d\omega \gg 1,
\end{equation}
$Y>2$  is some powers of $T$ which will be chosen later
Denote by  $\mathscr{R}_{1}$  the number of the class-1 zeros $\rho=\beta+i\gamma\in A$ for which (\ref{3.1000})  is satisfied, and
$\mathscr{R}_{2}$ for the class-2 zeros which satisfy (\ref{23.4000}). Moreover, we assume that any two zeros $\rho=\beta+i\gamma, \rho'=\beta'+i\gamma'$
counted in $\mathscr{R}_i$ satisfy $|\gamma-\gamma'|\geq 2\log^4T$. Then we have
\begin{align}\label{3.2000}
N_{\zeta}(\sigma,K,T) \ll (1+\mathscr{R}_{1}+\mathscr{R}_{2}) \log^{5} T.
\end{align}
\begin{Lemma}\label{le2.50000} Let $t>1.$
Then we have
$
\zeta_{K}(1/2+it)\ll t^{k/6+\varepsilon}.
$
\end{Lemma}
\begin{proof}
For $\zeta_{K}(s)$ being Dedekind zeta function associated with the algebraic number field $K,$ $[K:\mathbb{Q}]=k$, it is well known that $\zeta_{K}(1/2+it)\ll t^{k/4}.$ If K is abelian the bounds for Riemann zeta function and Dirichlet $L-$function give $\zeta_{K}(1/2+it)\ll t^{k/6+\varepsilon}$ for any $\varepsilon>0.$   In \cite{H88}, $\zeta_{K}(1/2+it)\ll t^{k/6+\varepsilon}$ was proved  for all $K,$ whether abelian or not.
\end{proof}

\begin{Lemma}\label{lea0000}
Taking $Y=T^{k/(6\sigma-3)-\varepsilon},$
then we can rule out the situation of
(\ref{23.4000}).
\end{Lemma}
\begin{proof}
For $Y=T^{k/(6\sigma-3)-\varepsilon},$ by Lemma \ref{le2.50000}
we have
\begin{align*}
&Y^{\frac1 2-\beta}\int_{-\log^{2} T}^{\log^{2} T}\left|\zeta_{K}(\frac1 2+i\gamma+i\omega)
\right|d\omega
\\&\ll T^{-\varepsilon}.
\end{align*}
This completes the proof.
\end{proof}

The following lemma  can be proven by section 3.
\begin{Lemma}\label{leb000}   For $1/2<\sigma<1,$ we have
\begin{equation}
\mathscr{R}_0 \ll T^{\varepsilon}\left(N^{2-2\sigma}+
\mathscr{R}_0N^{1+\lambda-\kappa-2\sigma}
T^{\kappa+\varepsilon}\right),
\end{equation}
for some $N$ satisfying
\begin{equation}\label{23.700}
 Y^{\frac12} \log Y<N\leq Y\log^2Y,
\end{equation}
where $(\kappa,\lambda)$ is any exponent pair.
\end{Lemma}

For $k=2,$ to bound $\mathscr{R}_1$, the interval $[-T, T]$ can be  divided into consecutive  intervals of length $T_0$. Denote by  $\mathscr{R}_0$ the number of class-1 zero in $\mathscr{R}_1$  with $|t|\leq T_0$.  Then  Huxley's subdivision \cite{Hu}, we have
$$
\mathscr{R}_{1}\ll \mathscr{R}_{0}\left(1+\frac{T}{T_{0}}\right).
$$

Set $$T_{0}=N^{\frac{2\sigma-1-(\lambda-\kappa)}
{\kappa}+\varepsilon}.$$
By  Lemma \ref{leb000}, one can obtain
\[
\mathscr{R}_{0} \ll T^{\varepsilon}N^{2-2\sigma}.
\]
Therefore,
we have
\begin{align*}
\mathscr{R}_{1}\ll  T^{\varepsilon}N^{2-2\sigma}\left(1+\frac{T}{T_{0}}\right)\ll T^{\varepsilon}\left(N^{2-2\sigma}+TN^{2-2\sigma-\frac{2\sigma-1-(\lambda-\kappa)}{\kappa}}\right).
\end{align*}
Setting
$$Y=T^{2/{(6\sigma-3)}-\varepsilon},$$
then for $k=2,$ $$\frac{1+\lambda+\kappa}{2(1+\kappa)}\leq \sigma\leq1,$$
  and
$$\frac{1+\lambda-6\kappa}{2-8\kappa}\leq \sigma\leq 1,$$
 we get
\begin{align*}
\begin{split}
\mathscr{R}_1+\mathscr{R}_2&\ll
T^{\varepsilon}\left(N^{2-2\sigma}+TN^{2-2\sigma-\frac{2\sigma-1-(\lambda-\kappa)}{\kappa}}
+1\right)\\
&\ll
T^{\varepsilon}\left(Y^{2-2\sigma}+T
Y^{1-\sigma-\frac{2\sigma-1-(\lambda-\kappa)}{2\kappa}}
+1\right)\\
&\ll
T^{\varepsilon}
\left(T^{\frac{4(1-\sigma)}{6\sigma-3}}
+T^{1+\frac{2\left(1- \sigma-\frac{2\sigma-1-
(\lambda-\kappa)}{2\kappa}\right)}
{6\sigma-3}}\right)\\
&\ll
T^{\frac{4(1-\sigma)}{6\sigma-3}+\varepsilon},
\end{split}
\end{align*}
where $\kappa<1/4.$
Note that when $\kappa+1\leq 5\lambda/2,$ we have
\[
\frac{1+\lambda-6\kappa}{2-8\kappa}\geq \frac{1+\lambda+\kappa}{2(1+\kappa)}.
\]
This completes the proof of Theorem \ref{th1}.

For $k\geq3,$ to bound $\mathscr{R}_1$, the interval $[-T, T]$ can be  divided into consecutive  intervals of length $T_0$. Denote by  $\mathscr{R}_0$ the number of class-1 zero in $\mathscr{R}_1$  with $|t|\leq T_0$.  Then  Huxley's subdivision \cite{Hu}, we have
$$
\mathscr{R}_{1}\ll \mathscr{R}_{0}\left(1+\frac{T}{T_{0}}\right).
$$
Huxley's subdivision method is hte technique of dividing $T$ into subintervals of length $T_{0}$ and then multiplying the obtained estimate by $(1+T/T_{0}).$
Set $$T_{0}=N^{ 4\sigma-2 +\varepsilon}.$$
By  Lemma \ref{leb000}, choosing $(\kappa,\lambda)=(1/2,1/2),$ one can obtain
\[
\mathscr{R}_{0} \ll T^{\varepsilon}N^{2-2\sigma}.
\]
Therefore, for
$$\frac23\leq \sigma\leq1,$$
we have
\begin{align*}
\mathscr{R}_{1}\ll  T^{\varepsilon}N^{2-2\sigma}
\left(1+\frac{T}{T_{0}}\right)\ll T^{\varepsilon}\left(N^{2-2\sigma}+TN^{4-6\sigma}\right).
\end{align*}
Recall that
$$Y=T^{k/{(6\sigma-3)}-\varepsilon},$$
by Lemma \ref{lea0000}, we can obtain
 \[
 \mathscr{R}_{1}+ \mathscr{R}_{2}\ll  T^{\frac{2k(1-\sigma)}{6\sigma-3}
 +\varepsilon},
 \]
 provided that $k\geq3$ for
\[
\sigma \leq 1.
\]
As (otherwise the estimate is trivial)
 \[
\frac{2k(1-\sigma)}{6\sigma-3}\leq 1\]
for $\frac{2k+3}{2k+6}\leq \sigma \leq 1,$
we have
\[
 \mathscr{R}_{1}+ \mathscr{R}_{2}\ll  T^{\frac{2k(1-\sigma)}{6\sigma-3}
 +\varepsilon},
 \]
provided that
\[
 \max\left\{\frac23,
\frac{2k+3}{2k+6}\right\}\leq \sigma \leq 1.
 \]
This completes the proof of Theorem \ref{th3}.
 $\square$

\section{Proof of Lemma \ref{leb000}}

To prove Lemma \ref{leb000},  we will follow the argument in  section 11 of \cite{I} and \cite{R19}.  By dyadic interval we  split the sum in into $O(\log Y)$ sums of the form
$\sum_{M<n\leq 2M}\frac{D_X(n)}{n^\rho}e^{-\frac nY} .
$
Hence each $\rho=\beta+i\gamma$ counted in $R_{1}$ satisfies
\begin{align}\label{M}
\sum_{M_j<n\leq 2M_j}\frac{D_X(n)}{n^\rho}e^{-\frac nY} \gg \frac{1}{\log Y}
\end{align}
for some $M_j$ satisfying
$$
X \leq M_j =2^{-j}Y\log^{2} Y\leq Y\log^{2} Y, \ \ j =1, 2,..., J,
$$
where $J= [\log(X^{-1}Y\log^2Y)/(\log 2)]+1$.  Denote by $\mathcal R_j$ the number of zeros satisfying (\ref{M}) in $R_1$, and let $\mathcal{R}=\mathcal R_{j_0}$ be the largest among $\mathcal R_j$ for $1\leq j\leq J$. Then
\begin{align}\label{24.2}
\mathscr{R}_1\ll \mathcal R \log Y.
\end{align}
Write $M=M_{j_0}$ and choose  the large enough positive integer $l\geq l_{1}\geq1$ such that ($l$ is a natural number depended on $M$ and $l_{1}$ is a fixed integer)
\begin{align*}
M^{l} \ll Y^{l_{1}} \log^{l_{1}} Y \ll M^{l+1},
\end{align*}
and then raise both sides of (\ref{M}) to the power $l$, we get
\begin{align*}
\left|\sum_{M^{l}<n\leq (2M)^{l}}\frac{E_X(n)}{n^{\rho}}\right| \gg \frac{1}{\log^{l} Y},
\end{align*}
where
\[
E_X(n)=\sum_{n_{1}n_{2}\cdots n_{l}=n\atop M<n_{i}\leq 2M}D_X(n_{1})D_X(n_{2})\cdots D_X(n_{l})e^{-\frac {n_{1}+n_{2}+\cdots+n_{l}}{Y}}.
\]
Write $N=M^{l}, N_1=(2M)^l$. Then
\begin{align*}
Y^{\frac{l_{1}^{2}}{l_{1}+1}} \log^{\frac{2l_{1}^{2}}{l_{1}+1}} Y\ll N \ll Y^{l_{1}} \log^{2l_{1}} Y.
\end{align*}
As the upper bound of  2-th power moment for the cusp forms, we know that  choosing $l_{1}=1$ is suitable.
Then the number
 of $\rho$ which  satisfies
\begin{align}\label{2E}
\left|\sum_{N<n\leq N_1}\frac{E_X(n)}{n^{\rho}}\right| \gg \frac{1}{\log^{l} Y}
\end{align}
can be   counted by $\mathcal R.$
Now we show that
\begin{align}\label{24.4}
\sum_{N<n\leq N_1}\frac{\left| E_X(n)\right|^{2}}{n^{2\sigma}}
\ll N^{1-2\sigma+\varepsilon}.
\end{align}
Then we have
\begin{align*}
\begin{split}
\sum_{N<n\leq N_1}\frac{\left| E_X(n)\right|^{2}}{n^{2\sigma}}
\ll N^{1-2\sigma+\varepsilon}.
\end{split}
\end{align*}
To bound $\mathcal R$, we let $\rho=\beta_r+i\gamma_r$ in (\ref{2E}), where  $\sigma\leq \beta_r\leq 1,\ |\gamma_r|\leq T$. Then
\[\mathcal R\ll \log^{l}Y \sum_{r\leq \mathcal R}\left|\sum_{N<n\leq N_1}\frac{E_X(n)}
{n^{\beta_r+i\gamma_r}}\right| = \log^{l}Y\sum_{r\leq \mathcal R}\left|\sum_{N<n\leq N_1}\frac{E_X(n)}
{n^{\sigma+i\gamma_{r}}}
n^{\sigma-\beta_r}\right|.
\]
By partial summation we get
\begin{align} \label{2G}
\mathcal R \ll  \log^{l}Y \max_{N<u \leq N_1}\sum_{r\leq \mathcal R}\left|
\sum_{N<n\leq u}\frac{E_X(n)}{n^{\sigma+i\gamma_{r}}}
\right|.
\end{align}
To bound the above double sum, we will use the following well known lemma, which can also be seen in \cite{I,T,Mo}.
 \begin{Lemma}\label{le2.2} {\rm (Hal\'{a}sz-Montgomery inequality)} Let  $a_n,b_n \ (n=1,\ 2, ... )$ be complex numbers. For $a=\{a_{n}\}_{n=1}^{\infty}\in\mathbb{C}^\infty$, $\ b=\{b_{n}\}_{n=1}^{\infty}\in \mathbb{C}^\infty$, define the inner product
\[
(a,b)=\sum_{n=1}^{\infty}a_{n}\overline{b}_{n},\quad \|a\|^{2}=(a,a).
\]
Let $\xi,\varphi_{1},\varphi_{2},...,\varphi_{R}$ be arbitrary vectors in $\mathbb{C}^\infty$. Then we have
\begin{align}
\sum_{r\leq R}\left|\left(\xi, \varphi_{r}  \right)  \right|
\leq\|\xi\| \bigg(\sum_{r,s \leq R}(\varphi_{r}, \varphi_{s} )\bigg)^{\frac{1}{2}}.
\end{align}
 \end{Lemma}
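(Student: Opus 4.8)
The plan is to prove this by the standard linearization-plus-Cauchy--Schwarz argument, treating the left-hand side as a single inner product after rotating away the phases. The only structural facts I would use are that $(\cdot,\cdot)$ is linear in its first slot and conjugate-linear in its second, that it obeys the Cauchy--Schwarz inequality $|(a,b)|\leq\|a\|\,\|b\|$, and that the ambient vectors have finite norm (as they do in the application, being finitely supported) so that every inner product converges.

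First I would introduce unimodular multipliers to collapse the sum of moduli. For each $r$ with $(\xi,\varphi_r)\neq0$ set $c_r=|(\xi,\varphi_r)|/(\xi,\varphi_r)$, and put $c_r=1$ otherwise, so that $|c_r|=1$ and $c_r(\xi,\varphi_r)=|(\xi,\varphi_r)|$ for every $r$. Summing over $r\leq R$ and using conjugate-linearity in the second argument, which gives $c_r(\xi,\varphi_r)=(\xi,\overline{c_r}\varphi_r)$, I would rewrite the left-hand side as one inner product,
\[
\sum_{r\leq R}\bigl|(\xi,\varphi_r)\bigr|=\sum_{r\leq R}(\xi,\overline{c_r}\varphi_r)=\Bigl(\xi,\ \sum_{r\leq R}\overline{c_r}\varphi_r\Bigr).
\]

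Next I would apply Cauchy--Schwarz, bounding this inner product by $\|\xi\|$ times the norm of $\psi:=\sum_{r\leq R}\overline{c_r}\varphi_r$. Expanding the norm with the bilinearity conventions,
\[
\|\psi\|^2=\Bigl(\sum_r\overline{c_r}\varphi_r,\ \sum_s\overline{c_s}\varphi_s\Bigr)=\sum_{r,s\leq R}\overline{c_r}c_s\,(\varphi_r,\varphi_s),
\]
and since $|\overline{c_r}c_s|=1$ the triangle inequality yields $\|\psi\|^2\leq\sum_{r,s\leq R}|(\varphi_r,\varphi_s)|$. Combining the two displays gives the claimed estimate.

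The argument is short, and the single delicate point is the passage through the double sum: the linearization forces the unimodular factors $\overline{c_r}c_s$ into $\|\psi\|^2$, and it is precisely the triangle inequality that produces the controlling quantity $\sum_{r,s}|(\varphi_r,\varphi_s)|$ on the right. I would note that the double sum in the assertion is to be read with the moduli of the cross terms; this majorant is exactly what the proof delivers and is the form in which Lemma \ref{le2.2} is invoked later. No positivity or orthogonality among the $\varphi_r$ is required, so the bound holds for arbitrary finite-norm vectors, as stated.
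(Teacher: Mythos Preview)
Your argument is correct and is precisely the standard proof of the Hal\'asz--Montgomery inequality; the paper itself does not supply a proof but merely cites the references \cite{I,T,Mo}, so there is nothing to compare against beyond noting that your derivation is the classical one found in those sources. Your remark that the double sum on the right must carry absolute values is well taken: as literally printed, $\sum_{r,s}(\varphi_r,\varphi_s)=\bigl\|\sum_r\varphi_r\bigr\|^2$ can vanish while the left side does not (e.g.\ $\varphi_2=-\varphi_1$), so the intended statement is indeed $\sum_{r,s}|(\varphi_r,\varphi_s)|$, which is exactly the form used downstream in the paper when the bound is applied with $|G(i\gamma_r-i\gamma_s)|$.
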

For $\xi= \{\xi_{n}\}_{n=1}^{\infty}$, by Lemma \ref{le2.2}, we have
\begin{align*}
\xi_{n}=
\begin{cases}
E_{X}(n)e^{\frac{n}{2N}}n^{-\sigma} &\textup{if}\ N<n \leq u,\\
0   & otherwise,
\end{cases}
\end{align*}
and $\varphi_{r}= \{\varphi_{r,n}\}_{n=1}^{\infty}$ with
$\varphi_{r,n}=e^{-\frac{n}{2N}}n^{-i\gamma_{r}}$.
By  (\ref{24.4}) one has $||\xi||^{2} \ll N^{1-2\sigma+\varepsilon}$.
Thus by (\ref{2G}) and Lemma \ref{le2.2} we get
\begin{align}\label{I}
\mathcal R ^2\ll Y^{\varepsilon}\left(\mathcal R N^{2-2\sigma}
+N^{1-2\sigma}\sum_{r\neq s \leq \mathcal R }\left|G(i\gamma_{r}-i\gamma_{s})   \right|\right).
\end{align}
where
$
G(it)=\sum_{n=1}^{\infty}e^{-\frac{n}{N}}n^{-it}.
$
Here we have used the fact  that $G(0)\ll N$ for bounding the diagonal terms.
By
Mellin's transform, we have
\[
\sum_{n=1}^{\infty}e^{-\frac{n}{Y}}n^{-s}=
(2\pi i)^{-1}\int_{2-i\infty}^{2+i\infty}\zeta(s+\omega)\Gamma(\omega)
Y^{\omega}d\omega.
\]
Thus
\begin{align*}
G(it)=
(2\pi i)^{-1}\int_{2-i\infty}^{2+i\infty}\zeta(\omega+it)\Gamma(\omega)
N^{\omega}d\omega.
\end{align*}
Let $(\kappa,\lambda)$ be any exponent pair.
Then move the  integration line  to $\Re(\omega)=\lambda-\kappa$ for $(\kappa,\lambda)\neq (1/2,1/2)$ and to $\Re(\omega)=\lambda-\kappa+\varepsilon$ for $(\kappa,\lambda)=(1/2,1/2)$, and note  that the residue at the simple pole $\omega=1-it$ is $O\left(Ne^{-|t|}\right)$, we get
\begin{align*}
G(it)=
(2\pi i)^{-1}\int_{\lambda-\kappa-i\infty}^{\lambda-\kappa+i\infty}\zeta(\omega+it)\Gamma(\omega)
N^{\omega}d\omega+O\left(Ne^{-|t|}\right).
\end{align*}
By Stiring's formula, it is easy to check that  the contribution from  $|v| \geq \log ^{2} T$ to the above integral is $O(1)$, where $v=\Im\omega$. Moreover for  $|v| \leq \log ^{2} T$, one has $\Gamma(\lambda-\kappa+iv)\ll 1$. Therefore, we get
\begin{align*}
\begin{split}
\sum_{r\neq s \leq \mathcal R}&\left|G\left(i\gamma_{r}-i\gamma_{s}\right)
\right|\\
\ll & N\sum_{r\neq s \leq \mathcal R}e^{-\left|\gamma_{r}-\gamma_{s}\right|}
+\mathcal R^{2}+N^{\lambda-\kappa}\int_{-\log^{2} T}^{\log^{2} T} \sum_{r\neq s \leq \mathcal R}\left|\zeta\left(\lambda-\kappa+ i\gamma_{r}-i\gamma_{s}+iv\right)\right|dv\\
\ll & \mathcal R N+\mathcal R^{2}+N^{\lambda-\kappa}\mathcal R^{2}T^{\kappa+\varepsilon} ,
\end{split}
\end{align*}
where we have used
\[
\zeta(\lambda-\kappa+it)\ll (|t|+1)^{\kappa+\varepsilon},
\]
which can be seen in \cite{I,T}.
Back to (\ref{I}), we get
\begin{align*}
\begin{split}
\mathcal R&\ll T^{\varepsilon}\left(N^{2-2\sigma}+\mathcal R+
N^{1-2\sigma}N^{\lambda-\kappa}\mathcal RT^{\kappa+\varepsilon}
\right)\\
&\ll T^{\varepsilon}\left(N^{2-2\sigma}+ \mathcal R+
\mathcal R N^{1+\lambda-\kappa-2\sigma}T^{\kappa+\varepsilon}\right).
\end{split}
\end{align*}

\subsection*{Acknowledgements} I would like to thank the referee for going through the manuscript and valuable suggestions.
%This work was supported by Natural Science Foundation of China (Grant No. 12401004) and Natural Science Foundation of Henan Province (Grant No. 242300421676).

%\subsection*{Data availability}
%There is no data availability statement is present.
%\subsection*{Declarations}
%Conflict of interest On behalf of all authors, the corresponding author states that there is no conflict of
%interest.

%\address{Wei Zhang\\ School of Mathematics and Statistics\\
%               Henan University\\
%               Kaifeng  475004, Henan\\
%               China}
%\email{zhangweimath@126.com}

\end{document}